\newtheorem*{T1}{Theorem~\ref{top complexity of conf(n,w)}}
\newtheorem{thm}{Theorem}[section]
\newtheorem{lem}[thm]{Lemma}
\newtheorem{prop}[thm]{Proposition}
\newtheorem{cor}[thm]{Corollary}
\newtheorem{exam}[thm]{Example}
\theoremstyle{definition}
\theoremstyle{definition}
\newtheorem*{claim*}{Claim}
\newtheorem*{quest*}{Question}
\newtheorem*{remark*}{Remark}
\newtheorem*{fact*}{Fact}
\newcommand{\thmtext}{\[
\textbf{TC}\big(\text{conf}(n,w)\big)=\begin{cases}1&\mbox{if }n=1,\\
2n-2&\mbox{if }1<n\le w,\\
2n-2\big\lceil\frac{n}{w}\big\rceil+1&\mbox{if }n>w.
\end{cases}
\]}
\newcommand{\R}{\ensuremath{\mathbb{R}}}
\title{The topological complexity of the ordered configuration space of disks in a strip}
\author{Nicholas Wawrykow}
\date{}
\begin{document}
\maketitle

\begin{abstract}
How hard is it to program $n$ robots to move about a long narrow aisle such that only $w$ of them can fit across the width of the aisle?
In this paper, we answer that question by calculating the topological complexity of $\text{conf}(n,w)$, the ordered configuration space of open unit-diameter disks in the infinite strip of width $w$.
By studying its cohomology ring, we prove that, as long as $n$ is greater than $w$, the topological complexity of $\text{conf}(n,w)$ is $2n-2\big\lceil\frac{n}{w}\big\rceil+1$, providing a lower bound for the minimum number of cases such a program must consider.
\end{abstract}

\section{Introduction}

How hard is it to program a fleet of $n$ robots to move about a long narrow aisle such that only $w$ of the robots can fit across the width of the aisle?
If we are given the initial and final configurations of the robots, it is rather easy to tell the robots how to move without bumping into each other or the walls.
However, if we seek a universal program that can guide the robots between every possible combination of initial and final configurations and avoids collisions, the programing task becomes far more daunting.
We could write a program that considers every possible relative configuration of the robots, but as $n$ grows the number of cases quickly gets out of hand.
Instead of dealing with such a task, we would like to develop a program that breaks this problem into as few cases as possible.

In this \emph{motion planning problem} we seek to minimize the number of different cases of starting and ending positions of the fleet of robots for which we can find a continuous rule, or \emph{motion planner}.
In the case of $n$ robots in a long aisle such that at most $w$ of the robots fit across the width of the aisle, this problem is equivalent to determining the \emph{topological complexity,} denoted $\textbf{TC}$, of the \emph{ordered configuration space of $n$ open unit-diameter disks in the infinite strip of width $w$}, see Figure \ref{conf52}.
This space, abbreviated $\text{conf}(n,w)$, is the space of ways of embedding $n$ open unit-diameter disks in the infinite strip of width $w$, i.e., it is the following subspace of $\R^{2n}$
\[
\text{conf}(n, w):=\big\{(x_{1}, y_{1}, \dots, x_{n}, y_{n})\in \R^{2n}|(x_{i}-x_{j})^{2}+(y_{i}-y_{j})^{2}\ge 1\text{ for }i\neq j\text{ and }\frac{1}{2}\le y_{i}\le w-\frac{1}{2} \text{ for all } i\big\},
\]
and its topology as has been studied in \cite{alpert2021configuration, alpert2021configuration1, BBK, wawrykow2022On, wawrykow2023representation} among others.
It is a natural restriction of the ordered configuration space of $n$ points in the plane, $F_{n}(\R^{2})$, which Farber and Yuzvinsky proved has topological complexity $2n-2$ if $n>1$ \cite[Theorem 1]{farber2002topological}.

In this paper, we determine $\textbf{TC}\big(\text{conf}(n,w)\big)$ by studying the cohomology ring of $\text{conf}(n,w)$.
This gives us a lower bound for $\textbf{TC}\big(\text{conf}(n,w)\big)$.
This, combined with an identical upper bound arising from a space homotopy equivalent to $\text{conf}(n, w)$, yields the following result.

\begin{thm}\label{top complexity of conf(n,w)}
\thmtext
\end{thm}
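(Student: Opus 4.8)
The plan is to verify the three cases separately, with $n>w$ the substantive one. If $n=1$, then $\text{conf}(1,w)=\R\times[\tfrac12,w-\tfrac12]$ is convex, hence contractible, and $\textbf{TC}=1$. If $1<n\le w$, there is enough room for the $n$ disks to move as freely as $n$ labelled points in the plane, so $\text{conf}(n,w)$ is homotopy equivalent to $F_n(\R^2)$ --- a property of these spaces recorded in \cite{alpert2021configuration, alpert2021configuration1, BBK, wawrykow2022On, wawrykow2023representation} --- whence $\textbf{TC}(\text{conf}(n,w))=\textbf{TC}(F_n(\R^2))=2n-2$ by \cite{farber2002topological}. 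So from now on assume $n>w$ and put $d:=n-\lceil n/w\rceil$, so $d\ge 1$ once $w\ge 2$ (the case $w=1$ is degenerate). The upper bound is soft: $\text{conf}(n,w)$ is known to be homotopy equivalent to a CW complex $Y$ with $\dim Y=d$, and then $\textbf{TC}(\text{conf}(n,w))=\textbf{TC}(Y)\le\mathrm{cat}(Y\times Y)\le\dim(Y\times Y)+1=2d+1$; this is why a model of dimension $d$ is the right thing to use.

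All the content is in the lower bound $\textbf{TC}(\text{conf}(n,w))\ge 2d+1$, which I would prove by exhibiting $2d$ zero-divisors in $H^\ast(\text{conf}(n,w))$ whose product is nonzero, the zero-divisor cup-length being a lower bound for $\textbf{TC}-1$. For this I use the description of $H^\ast(\text{conf}(n,w))$ from \cite{alpert2021configuration, alpert2021configuration1, BBK, wawrykow2022On, wawrykow2023representation}: it is generated in degree one by orbit classes $a_{ij}$, one for each unordered pair $i\ne j$, and a product $a_{e_1}\cdots a_{e_k}$ of generators is governed by the graph on $\{1,\dots,n\}$ with edge set $\{e_1,\dots,e_k\}$, vanishing as soon as that graph contains a cycle or a connected component on more than $w$ vertices. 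Call an edge set \emph{admissible} if it spans a forest all of whose components have at most $w$ vertices; then the maximal admissible edge sets have exactly $d$ edges, and among the corresponding degree-$d$ classes some form part of a basis of $H^d(\text{conf}(n,w))$.

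Refining the method of Farber--Yuzvinsky, I would choose two edge-disjoint maximal admissible edge sets $F=\{e_1,\dots,e_d\}$ and $F'=\{e_1',\dots,e_d'\}$ on $\{1,\dots,n\}$ arranged so that: (i) the classes $\omega:=a_{e_1}\cdots a_{e_d}$ and $\omega':=a_{e_1'}\cdots a_{e_d'}$ are linearly independent in $H^d(\text{conf}(n,w))$; and (ii) among all $\binom{2d}{d}$ ways of choosing $d$ of the $2d$ edges in $F\sqcup F'$, the only choices for which the chosen edges and the complementary edges are \emph{both} admissible are $F$ and $F'$ themselves. (For $w=2$, admissible edge sets are matchings, and one may take $F$ and $F'$ to be the two alternating edge sets of a Hamiltonian path or cycle on $\{1,\dots,n\}$; larger $w$ requires a more intricate arrangement.) Expanding the product of the $2d$ zero-divisors $\overline{a_{e_i}}=1\otimes a_{e_i}-a_{e_i}\otimes 1$ and $\overline{a_{e_i'}}=1\otimes a_{e_i'}-a_{e_i'}\otimes 1$ as a signed sum over subsets of the $2d$ generators, any subset of size $\ne d$ contributes $0$ for degree reasons, and a size-$d$ subset contributes a nonzero term only when it and its complement are both admissible; so by (ii) only the terms coming from $F$ and $F'$ remain, and the product equals $\pm\,\omega\otimes\omega'\pm\,\omega'\otimes\omega$, which is nonzero in $H^d(\text{conf}(n,w))\otimes H^d(\text{conf}(n,w))$ by (i). Hence the zero-divisor cup-length is at least $2d$, giving $\textbf{TC}(\text{conf}(n,w))\ge 2d+1$ and completing the computation.

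The main obstacle is exactly this combinatorial core: constructing $F$ and $F'$ with (i) and (ii) for \emph{every} $n>w$ and $w$, and checking them against the actual relations in the cohomology ring. This is where the width constraint genuinely does the work --- the vanishing of products of orbit classes that survive in $H^\ast(F_n(\R^2))$ is what pushes the zero-divisor cup-length up to the maximal value $2d$, whereas for $F_n(\R^2)$ the cup-length is $2n-3$, one less than twice the dimension --- and it is also why a softer lower bound, for instance from a homotopy-equivalent product of smaller configuration spaces, does not reach $2d$, as the case $\text{conf}(3,2)$ already shows. A secondary point to pin down is that the homotopy-equivalent CW model really has dimension exactly $d$, with $H^d(\text{conf}(n,w))\ne 0$, so that the two bounds coincide.
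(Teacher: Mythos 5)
Your handling of $n=1$, of $1<n\le w$, and of the upper bound (a homotopy-equivalent CW model of dimension $d=n-\big\lceil\frac{n}{w}\big\rceil$, namely $\text{cell}(n,w)$, plus $\textbf{TC}\le 2\dim+1$) matches the paper. The gap is in the lower bound, and it is twofold. First, the presentation of $H^{*}\big(\text{conf}(n,w)\big)$ that your argument is built on is not in the cited literature and is false as stated: the ring is not generated in degree one by classes $a_{ij}$ indexed by unordered pairs. Already for $\text{conf}(3,2)$ one has $\chi\big(\text{cell}(3,2)\big)=6-12=-6$, so $H^{1}$ has rank $7>\binom{3}{2}=3$; the Alpert--Manin basis contains degree-one classes coming from critical cells with followers (such as $1|3\,2$) that lie outside the span of any collection of "orbit classes.'' What is actually available, and what the paper proves (Propositions \ref{product of more than w-1 terms with same first is trivial}--\ref{product of disjoint critical cells}), is a set of product formulas for the specific classes $\nu(i\,j|n|\cdots|1)$, obtained by intersecting the submanifolds $V(f)\subset M(n,w)$ and re-expressing the result in the Alpert--Manin basis via Poincar\'e--Lefschetz duality. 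Your claimed dichotomy "a monomial vanishes iff its graph has a cycle or a component on more than $w$ vertices,'' together with the nonvanishing and linear-independence statements you need for your condition (i), is precisely what would have to be proved, not quoted; the vanishing half is plausible (a component on more than $w$ vertices would force $w+1$ disks into one column), but the nonvanishing/independence half requires the basis and intersection computations that constitute the paper's technical core.

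Second, even granting such a presentation, your construction stops at the decisive step: the edge-disjoint maximal admissible forests $F,F'$ with properties (i) and (ii) are only produced for $w=2$, and you yourself label the general case "the main obstacle.'' The paper carries this out explicitly: it takes $2\big(n-\big\lceil\frac{n}{w}\big\rceil\big)$ one-cells $a_{i,j},b_{i,j}$ whose larger entries lie in $\{n,\dots,n-\lceil\frac{n}{w}\rceil+1\}$ and whose smaller entries run over $\{1,\dots,n-\lceil\frac{n}{w}\rceil\}$, and then uses Propositions \ref{product of more than w-1 terms with same first is trivial}--\ref{product of disjoint critical cells} to show that in the expansion of $\prod\overline{\alpha_{i,j}}\cup\overline{\beta_{i,j}}$ each surviving term is a sum of basis elements containing a block whose contents distinguish it from all other terms, so no cancellation can occur (a cancellation analysis your condition (ii) is meant to replace, but which you have not verified for general $n>w$). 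So your plan has the right shape --- it is the Farber--Yuzvinsky zero-divisor scheme the paper also follows --- but as written it reduces the theorem to an unproved description of the cohomology ring and an unperformed combinatorial construction, which are exactly the two things the paper supplies.
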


It follows that, as long as we have sufficiently many robots, only $w$ of which fit across the aisle, we could never write a program that uses fewer than $2n-2\big\lceil\frac{n}{w}\big\rceil+1$ cases for the pairs of initial and final configurations.
Moreover, if we are very clever, we could write a program with exactly $2n-2\big\lceil\frac{n}{w}\big\rceil+1$ rules.

\begin{figure}[h]
\centering
\captionsetup{width=.8\linewidth}
\includegraphics[width = 8cm]{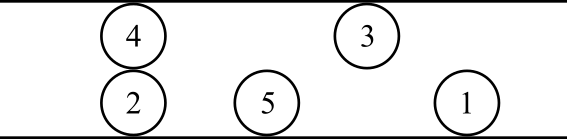}
\caption{A point in $\text{conf}(5,2)$.
Note that two disks can be aligned vertically, but three cannot.
}
\label{conf52}
\end{figure}

\subsection{Acknowledgements}
The author would like to thank Benson Farb for comments on an earlier draft of this paper.

\section{Topological complexity}
Given a space $X$, there is a natural motion planning problem: one would like a motion planner that takes in pair of points $x_{1}, x_{2}\in X$ and produces a continuous path from $x_{1}$ to $x_{2}$.
For example, given two points in $\text{conf}(n,w)$, i.e., two configurations of ordered disks, we would like a rule that tells us how to move from one point to another, i.e., how to move the disks in the first configuration to get the second configuration, keeping the disks in the strip and avoiding collisions.
Moreover, small perturbations to $x_{1}$ or $x_{2}$ should not change the path drastically, i.e., this motion planner should be continuous.
In the case of $\text{conf}(n,w)$, this means that configurations near to the initial and final ones should lead to similar movements of the disks.

We formalize the general problem by letting $X^{I}$ denote the space of all continuous paths 
\[
\gamma:[0,1]\to X,
\]
with the compact-open topology, and write
\begin{align*}
\pi:X^{I}&\to X\times X\\
\pi(\gamma)&=\big(\gamma(0), \gamma(1)\big)
\end{align*}
for the map $\pi$ that associates a path $\gamma\in X^{I}$ to its endpoints.
In this setting the problem of finding a continuous motion planner is equivalent to finding a continuous section of $\pi$, i.e., a map
\[
s:X\times X\to X^{I},
\]
such that $\pi\circ s=\text{id}_{X\times X}$.
If $X$ is convex, such a motion planner exists, namely $s$ takes the points $x_{1}$ and $x_{2}$ to the constant speed straight-line path between them, i.e.,
\[
s(x_{1}, x_{2})(t)=tx_{1}+(1-t)x_{2}.
\]
Unfortunately, continuous motion planners $s$ only exist for slightly more general $X$, as Farber proved that such an $s$ exists if and only if $X$ is contractible \cite[Theorem 1]{farber2003topological}.

Since only the simplest spaces have continuous motion planners, we change our question and ask how bad can the motion planning problem really be?
Instead of seeking a single motion planner $s$ for all of $X\times X$, we seek to find the minimal number $k$ of open subsets $U_{i}\subset X\times X$ covering $X$, such that there are continuous sections
\[
s_{i}:U_{i}\to X^{I},
\]
such that $\pi\circ s_{i}=\text{id}_{X\times X}|_{U_{i}}$.
This number $k$ is the \emph{topological complexity of $X$}, and we say $\textbf{TC}(X)=k$.
If no such $k$ exists, then we set $\textbf{TC}(X)=\infty$.

\begin{exam}
The configuration space of a robot arm consisting of $n$ bars $L_{1}, \dots, L_{n}$, such that $L_{i}$ and $L_{i+1}$ are connected a flexible joint can be viewed as the $n$-fold product of spheres, $S^{1}$s if the joints are hinges with one dimension of freedom and $S^{2}$s if the joints are balls and sockets with two dimensions of freedom.
In the hinge case the topology complexity is $n+1$, whereas in the ball and socket case the topological complexity is $2n+1$ \cite[Theorem 12]{farber2003topological}.
\end{exam}

\begin{exam}
If we shrink our robots from disks to points, we move from $\text{conf}(n,w)$ to $F_{n}(\R^{2})$, the ordered configuration space of $n$ points in the plane.
Farber and Yuzvinksy proved that $\textbf{TC}\big(F_{n}(\R^{2})\big)=2n-2$ \cite[Theorem 1]{farber2002topological}, and Farber and Grant extended this to ordered configurations in arbitrary $\R^{m}$, proving that $\textbf{TC}\big(F_{n}(\R^{m})\big)=2n-1$ is $m$ is even and $\textbf{TC}\big(F_{n}(\R^{m})\big)=2n-2$ if $m$ is odd \cite[Theorem 1]{farber2009topological}.
\end{exam}

Often the space $X$ is hard to work with directly, and this makes computing $\textbf{TC}(X)$ a challenge.
Fortunately, Farber proved that topological complexity is a homotopy invariant of $X$, \cite[Theorem 3]{farber2003topological}.
This result will allow us to replace $\text{conf}(n,w)$ with a finite cellular complex called $\text{cell}(n,w)$ that is significantly easier to manipulate.

\begin{prop}\label{TC is homotopy invariant}
(Farber \cite[Theorem 3]{farber2003topological}) Topological complexity is a homotopy invariant.
\end{prop}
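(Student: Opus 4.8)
The plan is to recognize $\textbf{TC}(X)$ as the sectional category (Schwarz genus) of the free path fibration $\pi_{X}\colon X^{I}\to X\times X$ and then to transport local sections along a homotopy equivalence directly. So suppose $\textbf{TC}(Y)=k$ (the case $\textbf{TC}(Y)=\infty$ being vacuous), witnessed by an open cover $Y\times Y=V_{1}\cup\dots\cup V_{k}$ together with continuous sections $\sigma_{i}\colon V_{i}\to Y^{I}$ of the path fibration $\pi_{Y}$. Fix a homotopy equivalence $f\colon X\to Y$, a homotopy inverse $g\colon Y\to X$, and a homotopy $H\colon X\times[0,1]\to X$ with $H(x,0)=x$ and $H(x,1)=g(f(x))$.

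First I would pull the cover back: set $U_{i}=(f\times f)^{-1}(V_{i})$, so the $U_{i}$ form an open cover of $X\times X$ of the same cardinality $k$. The task is then to produce a continuous section $s_{i}\colon U_{i}\to X^{I}$ of $\pi_{X}$ over each $U_{i}$. Given $(x_{1},x_{2})\in U_{i}$, the path $\sigma_{i}(f(x_{1}),f(x_{2}))$ runs from $f(x_{1})$ to $f(x_{2})$ in $Y$; composing with $g$ yields a path in $X$ from $g(f(x_{1}))$ to $g(f(x_{2}))$. To fix the endpoints I would concatenate three pieces: the path $t\mapsto H(x_{1},t)$ from $x_{1}$ to $g(f(x_{1}))$, then $g\circ\sigma_{i}(f(x_{1}),f(x_{2}))$, then the reversed path $t\mapsto H(x_{2},1-t)$ from $g(f(x_{2}))$ back to $x_{2}$. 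After reparametrizing this concatenation onto $[0,1]$, it depends continuously on $(x_{1},x_{2})$, giving a continuous section $s_{i}$ with $\pi_{X}\circ s_{i}=\text{id}_{U_{i}}$; hence $\textbf{TC}(X)\le\textbf{TC}(Y)$. Exchanging the roles of $X$ and $Y$ (using a homotopy $K$ from $\text{id}_{Y}$ to $f\circ g$) gives $\textbf{TC}(Y)\le\textbf{TC}(X)$, so the two agree.

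The main obstacle is not conceptual but a matter of care: verifying that the concatenated-and-reparametrized family of paths is genuinely continuous as a map $U_{i}\to X^{I}$ — equivalently, that its adjoint $U_{i}\times[0,1]\to X$ is continuous, which is where the exponential law for the compact-open topology and the continuity of $\sigma_{i}$, $f$, $g$, $H$ all get used — and that the three pieces really do match at the concatenation points so that the resulting path begins at $x_{1}$ and ends at $x_{2}$ on the nose. Since this statement is due to Farber, in the paper I would simply cite \cite[Theorem 3]{farber2003topological}; the sketch above is included only to orient the reader.
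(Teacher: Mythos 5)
Your proposal is correct: the concatenation argument you sketch (pull back the cover along $f\times f$, transport sections via $g$, and splice in the homotopy $H$ at both endpoints) is exactly Farber's standard proof of homotopy invariance, and the endpoint-matching and continuity points you flag are the only things to check. The paper itself gives no proof and simply cites \cite[Theorem 3]{farber2003topological}, so your citation together with this sketch matches the paper's treatment.
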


Even though we can often replace $X$ with a simpler space $Y$, finding open sets $\{V_{1}, \cdots, V_{k}\}$ of $Y\times Y$ with continuous sections $s_{i}:V_{i}\to Y^{I}$ with $k$ relatively small is a challenge.
Proving that such a $k$ is minimal is even harder.
As such we seek upper and lower bounds for topological complexity.
Farber gave such bounds in \cite[Sections 3 and 4]{farber2003topological}, and we will show that they are sharp in the case of $\text{conf}(n,w)$ when $n>w$.

\subsection{An upper bound for topological complexity}

We seek an upper bound for the topological complexity of $X$.
As long as $X$ is path connected and paracompact such a bound arises from the dimension of $X$.

\begin{prop}\label{upper bound}
(Farber \cite[Theorem 4]{farber2003topological})
For any path-connected paracompact space $X$,
\[
\textbf{TC}(X)\le 2\dim X+1.
\]
\end{prop}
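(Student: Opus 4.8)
The plan is to identify $\textbf{TC}(X)$ with the Schwarz genus (sectional category) of the free path fibration $\pi\colon X^{I}\to X\times X$ and then to bound that genus by the dimension of the base. The first observation is that a continuous section $s\colon U\to X^{I}$ of $\pi$ over an open set $U\subseteq X\times X$ is exactly a homotopy between the two restricted coordinate projections $p_{1}|_{U},\,p_{2}|_{U}\colon U\to X$: given $s$, the assignment $(u,t)\mapsto s(u)(t)$ is such a homotopy, and conversely a homotopy $H\colon U\times[0,1]\to X$ from $p_{1}|_{U}$ to $p_{2}|_{U}$ yields the section $u\mapsto H(u,-)$. So it suffices to cover $X\times X$ by $2\dim X+1$ open sets on each of which the two projections to $X$ are homotopic.

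Next I would reduce this to a statement about Lusternik--Schnirelmann category. If $U\subseteq X\times X$ is \emph{categorical}, meaning the inclusion $U\hookrightarrow X\times X$ is null-homotopic, then $p_{1}|_{U}$ and $p_{2}|_{U}$ are each null-homotopic; since $X$ is path-connected all constant maps $U\to X$ are homotopic, so $p_{1}|_{U}\simeq p_{2}|_{U}$ and $\pi$ admits a section over $U$. Consequently $\textbf{TC}(X)\le\mathrm{cat}(X\times X)$, where $\mathrm{cat}$ denotes the (unreduced) LS-category, i.e.\ the least number of categorical open sets needed to cover the space.

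It then remains to establish the classical bound $\mathrm{cat}(Y)\le\dim Y+1$ for path-connected paracompact $Y$, applied to $Y=X\times X$ with $\dim(X\times X)=2\dim X$. When $Y$ is a simplicial (or CW) complex of dimension $m$ this is the standard argument: fix a triangulation and, for $0\le i\le m$, let $W_{i}$ be the union, in the barycentric subdivision, of the open stars of the barycenters of the $i$-simplices. The open stars of two distinct $i$-simplices are disjoint, so $W_{i}$ is a disjoint union of open sets each deformation retracting onto a point; since $Y$ is path-connected these componentwise contractions can be assembled into a single null-homotopy of $W_{i}\hookrightarrow Y$, so $W_{i}$ is categorical. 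As $W_{0},\dots,W_{m}$ cover $Y$, this gives $\mathrm{cat}(Y)\le m+1$. In the genuinely paracompact (non-CW) setting one replaces the triangulation by the dimension-theoretic input that a paracompact space of covering dimension $m$ admits arbitrarily fine open covers splitting into $m+1$ families of pairwise-disjoint sets, and the same argument applies. Combining the three steps yields $\textbf{TC}(X)\le\mathrm{cat}(X\times X)\le 2\dim X+1$.

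The main obstacle is the last step in full paracompact generality: producing categorical open covers of the correct cardinality is precisely where covering-dimension theory (refinements of covers, the swelling and shrinking lemmas) does the real work, whereas the inequality $\textbf{TC}\le\mathrm{cat}$ and the identification of sections with homotopies are purely formal. One also wants to be mildly careful that $\dim(X\times X)\le 2\dim X$ holds in the relevant generality, which is again a standard but nontrivial fact from dimension theory.
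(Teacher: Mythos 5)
Your argument is correct, and it is essentially the standard (Farber--Schwarz) proof of this bound: identify $\textbf{TC}(X)$ with the sectional category of $\pi\colon X^{I}\to X\times X$, note that $\pi$ admits a section over any categorical open subset of $X\times X$ (using path-connectedness of $X$), so $\textbf{TC}(X)\le\mathrm{cat}(X\times X)\le\dim(X\times X)+1\le 2\dim X+1$. The paper itself offers no proof -- it simply cites Farber's Theorem 4 -- and your route coincides with the cited argument, with the only caveats being the point-set ones you already flag (paracompactness of $X\times X$ and subadditivity of covering dimension), which are likewise implicit in the original statement.
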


We may naturally view $\text{conf}(n,w)$ a subspace of $\R^{2n}$, proving that 
\[
\textbf{TC}\big(\text{conf}(n,w)\big)\le 4n+1.
\]

This bound is sub-optimal. 
As we will see in the next section, $\text{conf}(n,w)$ is homotopy equivalent to an $\big(n-\big\lceil\frac{n}{w}\big\rceil\big)$-dimensional cellular complex.
It follows that Proposition \ref{TC is homotopy invariant} proves that $\textbf{TC}\big(\text{conf}(n,w)\big)$ is at most $2n-2\big\lceil\frac{n}{w}\big\rceil+1$.
We will show that this bound is sharp when $w<n$.

\subsection{A lower bound for topological complexity}
Finding meaningful lower bounds for the topological complexity of a space, $X$, is a harder problem, as this corresponds to showing that \emph{any} motion planning program must consider a certain number of cases. 
Fortunately, the cup product structure on the cohomology ring of $X$ leads to a reasonable lower bound that will be sharp in our case.

Given a space $X$, let $\Delta_{X}\subset X\times X$ denote the diagonal, i.e., the set of points $\big\{(x, x)\in X\times X\big\}$.
A cohomology class 
\[
\phi\in H^{*}(X\times X;R)
\]
is called a \emph{zero-divisor} if its restriction to the diagonal is trivial, i.e.,
\[
\phi|_{\Delta_{X}}=0\in H^{*}(X;R).
\]
We say that the \emph{zero-divisor-cup-length} of $H^{*}(X;R)$ is the length of the longest non-trivial cup product $\big($in $H^{*}(X\times X;R)\big)$ of zero-divisors.
Farber proved that the zero-divisor-cup-length serves as a lower bound for the topological complexity of $X$.

\begin{prop}\label{cup product lower bound general}
(Farber \cite[Theorem 7]{farber2003topological})
The zero-divisor-cup-length of $H^{*}(X;R)$ is less than the topological complexity of $X$.
\end{prop}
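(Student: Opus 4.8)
The plan is to deduce this from A.\,S.~Schwarz's cohomological lower bound for the sectional category (genus) of a fibration. Observe first that the number $\textbf{TC}(X)$ defined above is exactly the genus of the path fibration $\pi\colon X^{I}\to X\times X$: the data in the definition of $\textbf{TC}$ — open sets $U_{i}$ covering $X\times X$ with continuous sections $s_{i}\colon U_{i}\to X^{I}$ of $\pi$ — is precisely the data counted by $\mathrm{secat}(\pi)$. So it suffices to prove the general genus statement, namely that if $B$ is covered by open sets $U_{1},\dots,U_{k}$ each admitting a section $s_{i}\colon U_{i}\to E$ of a map $p\colon E\to B$, and $u_{1},\dots,u_{k}\in H^{*}(B;R)$ satisfy $p^{*}u_{i}=0$, then $u_{1}\cup\dots\cup u_{k}=0$ in $H^{*}(B;R)$ — and then to translate the hypothesis $p^{*}u_{i}=0$ into the zero-divisor condition.

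For the genus statement I would run the standard relative-cohomology argument. Since $p\circ s_{i}=\mathrm{id}_{U_{i}}$, the inclusion $j_{i}\colon U_{i}\hookrightarrow B$ factors as $p\circ s_{i}$, so $j_{i}^{*}u_{i}=s_{i}^{*}p^{*}u_{i}=0$; i.e.\ $u_{i}$ restricts to zero on $U_{i}$. By exactness of the long exact sequence of the pair $(B,U_{i})$, each $u_{i}$ lifts to a class $\widetilde u_{i}\in H^{*}(B,U_{i};R)$. Now apply the relative cup product $H^{*}(B,U_{1};R)\otimes\dots\otimes H^{*}(B,U_{k};R)\to H^{*}\big(B,\textstyle\bigcup_{i}U_{i};R\big)$ to $\widetilde u_{1}\otimes\dots\otimes\widetilde u_{k}$: by naturality of cup products with respect to $H^{*}(B,A;R)\to H^{*}(B;R)$, the image of this product in $H^{*}(B;R)$ is $u_{1}\cup\dots\cup u_{k}$, but it also lies in the image of $H^{*}\big(B,\bigcup_{i}U_{i};R\big)=H^{*}(B,B;R)=0$. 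Hence $u_{1}\cup\dots\cup u_{k}=0$.

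It remains to identify $\pi^{*}$ with restriction to the diagonal. The constant-path map $c\colon X\to X^{I}$, $c(x)(t)=x$, is a homotopy equivalence — $X^{I}$ deformation retracts onto the constant paths via $(\gamma,s)\mapsto\big(t\mapsto\gamma((1-s)t)\big)$ — and $\pi\circ c$ is the diagonal embedding $\Delta\colon X\hookrightarrow X\times X$. Thus $\Delta^{*}=c^{*}\circ\pi^{*}$ with $c^{*}$ an isomorphism, so for $\phi\in H^{*}(X\times X;R)$ we have $\pi^{*}\phi=0$ if and only if $\phi|_{\Delta_{X}}=0$, i.e.\ if and only if $\phi$ is a zero-divisor. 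Putting the pieces together: if $\phi_{1},\dots,\phi_{k}$ are zero-divisors with $\phi_{1}\cup\dots\cup\phi_{k}\neq 0$, then by the genus statement no open cover of $X\times X$ of size $\le k$ admits sections of $\pi$, so $\textbf{TC}(X)\ge k+1$; taking $k$ to be the zero-divisor-cup-length yields $\textbf{TC}(X)>$ zero-divisor-cup-length, as claimed.

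The main obstacle is the relative-cohomology step of the second paragraph: one must know the relative cup product pairing $H^{*}(B,A;R)\otimes H^{*}(B,A';R)\to H^{*}(B,A\cup A';R)$ exists and is natural, which for open $A,A'$ is standard but does require a small excision/subdivision argument if one builds it at the cochain level. The translation of $\textbf{TC}$ into a genus and the identification of $\pi^{*}$ with $\Delta^{*}$ are then purely formal naturality arguments.
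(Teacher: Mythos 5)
Your proposal is correct, but note that the paper does not prove this proposition at all: it is quoted verbatim from Farber \cite[Theorem 7]{farber2003topological}, so there is no internal argument to compare against. What you have written is essentially the standard proof from the literature (Farber's own argument, which in turn is the Schwarz sectional-category cup-length bound specialized to the path fibration $\pi\colon X^{I}\to X\times X$): identify $\textbf{TC}(X)$ with the genus of $\pi$, show a class pulled back to zero by $\pi^{*}$ dies on each open set admitting a section, lift to relative classes, and use that the relative cup product lands in $H^{*}(B,B;R)=0$; then identify $\ker\pi^{*}$ with the zero-divisors via the homotopy equivalence given by constant paths, so that $\pi^{*}\phi=0$ iff $\phi|_{\Delta_{X}}=0$. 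All the steps check out, with two cosmetic remarks: the section satisfies $\pi\circ s_{i}=$ the inclusion $U_{i}\hookrightarrow X\times X$ rather than $\mathrm{id}_{U_{i}}$ (your factorization $j_{i}=p\circ s_{i}$ is what you actually use, and it is right), and the relative cup product $H^{*}(B,A;R)\otimes H^{*}(B,A';R)\to H^{*}(B,A\cup A';R)$ for open $A,A'$ is indeed the only technical input, exactly as you flag. So your write-up supplies a self-contained proof of a result the paper uses as a black box; it buys independence from the citation at the cost of rehearsing the Schwarz-type machinery, but it does not diverge mathematically from the source the paper relies on.
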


Farber also noted that if $\alpha\in H^{*}(X;R)$, then
\[
\overline{\alpha}:=1\otimes \alpha-\alpha\otimes 1\in H^{*}(X;R)\otimes H^{*}(X;R)\cong H^{*}(X\times X;R)
\]
is a zero-divisor, where the isomorphism $H^{*}(X;R)\otimes H^{*}(X;R)\cong H^{*}(X\times X;R)$ is given by the K\"{u}nneth theorem.
We will use this construction to find a family of $2n-2\big\lceil\frac{n}{w}\big\rceil$ zero-divisors in $H^{*}\big(\text{conf}(n,w)\times \text{conf}(n,w)\big)$ that have non-trivial cup product, yielding a lower bound for the topological complexity of $\text{conf}(n,w)$.

\section{Bounds for $\textbf{TC}\big(\text{conf}(n,w)\big)$}
In this section we determine sharp upper and lower bounds for the topological complexity of $\text{conf}(n,w)$.
We do this by recalling the definitions of two spaces homotopy equivalent to $\text{conf}(n,w)$ that will allow us to use Propositions \ref{upper bound} and \ref{cup product lower bound general} to bound the topological complexity of $\text{conf}(n,w)$.
The first space is the cellular complex $\text{cell}(n,w)$, whose dimension can easily be determined, yielding an upper bound for $\textbf{TC}\big(\text{conf}(n,w)\big)$.
The second space, denoted $M(n,w)$, will allow us to compute certain cup products in $H^{*}\big(\text{conf}(n,w)\big)$, which we will use to find a lower bound for  $\textbf{TC}\big(\text{conf}(n,w)\big)$.
We begin by stating several facts about $\text{cell}(n,w)$.
In the next subsection we will recall Alpert and Manin's construction of a discrete gradient vector field on $\text{cell}(n,w)$, the critical cells of which correspond to a basis for the cohomology of $M(n,w)$.

\subsection{$\text{cell}(n,w)$ and an upper bound for $\textbf{TC}\big(\text{conf}(n,2)\big)$}

In this subsection we recall the definition of the cellular complex $\text{cell}(n,w)$,  which was introduced in \cite{blagojevic2014convex}, and is homotopy equivalent to $\text{conf}(n,w)$.

Let \emph{$\text{cell}(n)$} denote the cellular complex whose cells are represented by \emph{symbols} consisting of an ordering of the numbers $1, \dots, n$ separated by vertical bars into \emph{blocks} such that no block is empty. 
A cell $f\in \text{cell}(n)$ is a codimensional-$1$ \emph{face} of a cell $g\in \text{cell}(n)$ if $g$ can be obtained by deleting a bar in $f$ and shuffling the resulting block.
We say that $g$ is a \emph{co-face} of $f$.
It follows that cells represented by symbols with no bars are the top-dimensional cells of $\text{cell}(n)$.
It follows that we can view $\text{cell}(n)$ as an $(n-1)$-dimensional complex, and each bar in a symbol lowers the dimension of the corresponding cell by $1$.

By restricting how big a block can be in $\text{cell}(n)$, one gets the cellular complex $\text{cell}(n,w)$.
The cellular complex \emph{$\text{cell}(n,w)$} is the subcomplex of $\text{cell}(n)$ consisting of the cells represented by symbols whose blocks have at most $w$ elements; see Figure \ref{critical_cells_of_cell32}.
Alpert, Kahle, and MacPherson proved that $\text{cell}(n,w)$ is homotopy equivalent to $\text{conf}(n,w)$ \cite[Theorem 3.1]{alpert2021configuration}, allowing us to study it in place of $\text{conf}(n,w)$.

\begin{prop}
(Alpert--Kahle--MacPherson \cite[Theorem 3.1]{alpert2021configuration}) There is an $S_{n}$-equivariant homotopy equivalence $\text{conf}(n,w)\simeq \text{cell}(n,w)$.
\end{prop}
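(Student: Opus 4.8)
The plan is to realize $\text{cell}(n,w)$ as the \emph{dual block complex} (a spine) of a Fox--Neuwirth-type stratification of $\text{conf}(n,w)$, and then to produce the asserted equivalence as a deformation retraction onto that spine. I would carry this out in three stages: first stratify $\text{conf}(n,w)$ by combinatorial type, then identify the dual complex of this stratification with $\text{cell}(n,w)$, and finally construct an $S_{n}$-equivariant deformation retraction onto it. Throughout, the width-$w$ constraint of the strip is what turns the model for points in the plane into the block-size-$\le w$ model.

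First I would set up the stratification. To a configuration $c=(x_{1},y_{1},\dots,x_{n},y_{n})$ I assign a symbol by grouping the disks into maximal vertical stacks (disks sharing a common $x$-coordinate), ordering the stacks left-to-right by $x$, ordering the disks inside each stack bottom-to-top by $y$, and inserting a bar between consecutive stacks. This records exactly the data of a cell of $\text{cell}(n)$: the blocks are the stacks, the order within a block is the vertical order, and the bars mark the horizontal gaps. The stratum $C_{S}$ of a symbol $S$ with $k$ blocks is cut out by the $n-k$ equalities $x_{i}=x_{j}$ (one for each vertical adjacency inside a block, since $\sum_{i}(b_{i}-1)=n-k$), so $C_{S}$ has geometric codimension $n-k$. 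A block of size $b$ demands $b$ unit-diameter disks stacked between the walls, with centers confined to $[\tfrac12,\,w-\tfrac12]$ and pairwise vertical gap $\ge 1$; this fits iff $b\le w$ and fails for $b=w+1$. Hence the nonempty strata are precisely those whose blocks all have at most $w$ elements, matching the cells of $\text{cell}(n,w)\subset\text{cell}(n)$ from \cite{blagojevic2014convex}.

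Next I would pass to the dual complex. To each stratum $C_{S}$ I associate a dual cell of dimension equal to its codimension $n-k$, so that the deepest (most-aligned) strata give the top cells and the generic strata $1|2|\cdots|n$ give vertices; one checks that $C_{T}\subseteq\overline{C_{S}}$ exactly when $S$ is obtained from $T$ by deleting a bar and shuffling the merged block, which is precisely the face relation defining $\text{cell}(n,w)$. I would verify that these dual cells assemble into a regular CW complex isomorphic to $\text{cell}(n,w)$, and then construct a deformation retraction of $\text{conf}(n,w)$ onto the realization of this spine by a flow that collapses the directions transverse to the strata, equalizing horizontal spacings and normalizing vertical positions while never creating an overlap or a wall-crossing. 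Because this flow is defined purely from the geometry of positions and never refers to the labels, while relabeling the disks relabels the associated symbol by the same permutation, every map commutes with the $S_{n}$-action and the resulting equivalence is automatically $S_{n}$-equivariant. As a sanity check, for $n=2,\,w\ge2$ the two generic strata $1|2,\,2|1$ give two vertices and the two aligned strata give two edges joining them, recovering $\text{conf}(2,w)\simeq S^{1}$.

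The hard part is the interaction between the positive width of the disks and the boundary of the space, which is exactly where the classical point-configuration picture must be corrected. Two difficulties demand care. First, the strata where some block attains the maximal size $w$ are geometrically rigid: the stack spans the full height and its disks touch both walls, so $\text{conf}(n,w)$ is a manifold with corners rather than an open manifold, and I must confirm that the retraction extends continuously up to and along these maximal cells without any cell collapsing or being missed. Second, unlike points, unit disks are mutually obstructing, so the flow cannot move disks freely; it must be engineered to stay collision- and wall-free while still landing on the spine and respecting all closure relations. Establishing that the dual block complex is regular and is a genuine deformation retract of $\text{conf}(n,w)$ --- not merely homology-equivalent --- is the crux of the argument.
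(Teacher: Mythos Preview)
The paper does not supply its own proof of this proposition: it is stated as a citation of \cite[Theorem 3.1]{alpert2021configuration} and used as a black box. There is therefore nothing in the present paper to compare your argument against.

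That said, your sketch is along the lines of how Alpert--Kahle--MacPherson actually establish the equivalence: stratify by the combinatorial type of vertical alignment, observe that the width constraint kills exactly the strata whose blocks exceed size $w$, and retract onto the dual spine. Your identification of the difficulties is accurate --- the positive disk radius means the flow must be built with more care than in the Fox--Neuwirth picture for points, and the strata where a block has exactly $w$ elements sit on the boundary of the manifold-with-corners $M(n,w)$ rather than in its interior. One point to tighten: your stratification is by \emph{exact} equality of $x$-coordinates, which for disks (unlike points) is not the natural decomposition of $\text{conf}(n,w)$ itself but rather of a subspace; the actual argument in \cite{alpert2021configuration} first retracts $\text{conf}(n,w)$ onto such a subspace and then analyzes it. As written, your proposal conflates these two steps, so the claim that the strata $C_S$ partition $\text{conf}(n,w)$ is not literally true without that preliminary retraction.
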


It follows from Proposition \ref{TC is homotopy invariant} that if we can determine the dimension of $\text{cell}(n,w)$ we will get an upper bound for the topological complexity of $\text{conf}(n,w)$.

\begin{prop}\label{dimension of cell(n,w)}
The dimension of $\text{cell}(n,w)$ is $n-\big\lceil\frac{n}{w}\big\rceil$.
\end{prop}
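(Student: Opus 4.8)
The plan is to compute $\dim \text{cell}(n,w)$ directly from the combinatorial description of its cells. Recall that a cell of $\text{cell}(n,w)$ is a symbol: an ordering of $1,\dots,n$ partitioned by bars into nonempty blocks, each of size at most $w$, and that a symbol with $k$ bars (equivalently $k+1$ blocks) represents a cell of dimension $(n-1)-k$. Hence the dimension of the complex is $(n-1)-k_{\min}$, where $k_{\min}$ is the least number of bars occurring in any legal symbol, i.e. $k_{\min}=b_{\min}-1$ with $b_{\min}$ the minimum possible number of blocks. So everything reduces to the elementary claim that the fewest number of blocks of size $\le w$ needed to partition a set of $n$ elements is $\big\lceil\frac{n}{w}\big\rceil$.

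First I would establish this counting fact. If a symbol has $b$ blocks, each of size at most $w$, then $n = \sum (\text{block sizes}) \le bw$, so $b \ge n/w$, and since $b$ is an integer, $b \ge \big\lceil\frac{n}{w}\big\rceil$. Conversely, $b = \big\lceil\frac{n}{w}\big\rceil$ blocks suffice: write $n = qw + r$ with $0 \le r < w$; if $r=0$ use $q = \lceil n/w\rceil$ blocks of size exactly $w$, and if $r>0$ use $q$ blocks of size $w$ together with one block of size $r$, for a total of $q+1 = \lceil n/w\rceil$ blocks — and any such partition (filled with any ordering of $1,\dots,n$) is a legal symbol. Therefore $b_{\min} = \big\lceil\frac{n}{w}\big\rceil$.

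Finally I would assemble the pieces: a cell of minimal dimension comes from a symbol with the maximal number of bars — but bounding dimension below is irrelevant; we want the \emph{maximum} dimension, which comes from the \emph{minimum} number of bars $k_{\min} = b_{\min} - 1 = \big\lceil\frac{n}{w}\big\rceil - 1$. Thus
\[
\dim\text{cell}(n,w) = (n-1) - \Big(\big\lceil\tfrac{n}{w}\big\rceil - 1\Big) = n - \big\lceil\tfrac{n}{w}\big\rceil,
\]
and one checks such a top-dimensional cell genuinely exists (take the symbol described above), so the dimension is attained. I expect no serious obstacle here: the only thing to be careful about is the off-by-one bookkeeping between "number of bars," "number of blocks," and "dimension," and the edge cases $w \mid n$ versus $w \nmid n$ in exhibiting a partition meeting the bound; both are routine.
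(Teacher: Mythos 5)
Your proposal is correct and follows essentially the same route as the paper: identify the minimum number of blocks (equivalently, bars) as $\big\lceil\frac{n}{w}\big\rceil$ and exhibit a symbol attaining it, the only difference being that you spell out the lower bound $n\le bw$ explicitly where the paper simply remarks that fewer bars would force a block of size greater than $w$. Your extra care with the bars/blocks/dimension bookkeeping is fine and introduces no gap.
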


\begin{proof}
We seek to find the cells represented by the symbols with the least number of vertical bars.
The symbol 
\[
1\,\cdots\, w \Big| w+1\, \cdots\, 2w \Big|\cdots\Big| \Big\lceil\frac{n}{w}\Big\rceil-n+1\,\cdots\, n
\]
corresponds to a top dimensional cell of $\text{cell}(n,w)$ as it has the fewest possible vertical bars, as any fewer bars would result in a block of size greater than $w$ and every block, perhaps other than the last, has $w$ elements.
This cell is $\big(n-\big\lceil\frac{n}{w}\big\rceil\big)$-dimensional as it has $\big\lceil\frac{n}{w}\big\rceil-1$ bars.
\end{proof}

This gives improves our upper bound on the topological complexity of $\text{conf}(n,w)$.

\begin{prop}\label{upper bound for top complex of conf(n,w)}
\[
\textbf{TC}\big(\text{conf}(n,w)\big)\le 2n-2\Big\lceil\frac{n}{w}\Big\rceil+1.
\]
\end{prop}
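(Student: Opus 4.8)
The plan is to deduce this bound purely formally from the three ingredients already assembled: the homotopy invariance of topological complexity (Proposition \ref{TC is homotopy invariant}), Farber's dimensional upper bound (Proposition \ref{upper bound}), and the identification of both the homotopy type and the dimension of $\text{cell}(n,w)$ (the Alpert--Kahle--MacPherson equivalence together with Proposition \ref{dimension of cell(n,w)}). No new geometric information about $\text{conf}(n,w)$ is required.

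Concretely, I would proceed in four short steps. First, invoke the $S_n$-equivariant homotopy equivalence $\text{conf}(n,w)\simeq\text{cell}(n,w)$ and Proposition \ref{TC is homotopy invariant} to replace $\text{conf}(n,w)$ by the finite CW complex $\text{cell}(n,w)$, so that $\textbf{TC}\big(\text{conf}(n,w)\big)=\textbf{TC}\big(\text{cell}(n,w)\big)$. Second, verify that $\text{cell}(n,w)$ satisfies the hypotheses of Proposition \ref{upper bound}: as a finite CW complex it is paracompact, and it is path-connected because $\text{conf}(n,w)$ is (any two configurations can be joined by sliding disks along the infinite strip) and path-connectedness is a homotopy invariant. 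Third, apply Proposition \ref{upper bound} to obtain $\textbf{TC}\big(\text{cell}(n,w)\big)\le 2\dim\text{cell}(n,w)+1$. Fourth, substitute $\dim\text{cell}(n,w)=n-\big\lceil\frac{n}{w}\big\rceil$ from Proposition \ref{dimension of cell(n,w)} and simplify to $2n-2\big\lceil\frac{n}{w}\big\rceil+1$.

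Since every step is either a direct citation or a one-line verification, there is no real obstacle: the only points needing a moment's care are confirming the (mild) path-connectedness and paracompactness hypotheses of Farber's bound, and observing that the resulting formula is also valid in the degenerate ranges $n=1$ and $1<n\le w$, where $\big\lceil\frac{n}{w}\big\rceil=1$ so the bound reads $2n-1$ (respectively $1$ when $n=1$). The substantive work of the paper — proving that this upper bound is actually achieved when $n>w$ — lies entirely in constructing the matching family of zero-divisors with nontrivial cup product in $H^{*}\big(\text{conf}(n,w)\times\text{conf}(n,w)\big)$, and not in this proposition.
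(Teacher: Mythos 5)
Your proposal is correct and follows exactly the paper's argument: replace $\text{conf}(n,w)$ by $\text{cell}(n,w)$ using homotopy invariance of $\textbf{TC}$ (Proposition \ref{TC is homotopy invariant}), apply Farber's dimensional bound (Proposition \ref{upper bound}), and substitute $\dim\text{cell}(n,w)=n-\big\lceil\frac{n}{w}\big\rceil$ from Proposition \ref{dimension of cell(n,w)}. The paper states this as an immediate consequence of those propositions; your version merely spells out the routine verifications.
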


\begin{proof}
This follows immediately from Propositions \ref{upper bound} and \ref{dimension of cell(n,w)}.
\end{proof}

Next, we find a lower bound the topological complexity of $\text{conf}(n,w)$.
Our bound will coincide with that of Proposition \ref{upper bound for top complex of conf(n,w)}, proving that they are sharp.

\subsection{$M(n,w)$ and a lower bound for $\textbf{TC}\big(\text{conf}(n,w)\big)$}
Having found an upper bound for $\textbf{TC}\big(\text{conf}(n,w)\big)$ we seek a suitable lower bound, i.e., the minimal number of cases any program controlling the movement of the $n$ robots about the aisle must contain. 
In order to use Proposition \ref{cup product lower bound general} we need to understand the cohomology ring of $\text{conf}(n,w)$.
Alpert and Manin construct a discrete gradient vector field on $\text{cell}(n,w)$. 
The critical $j$-cells of this vector field will correspond to a basis for $H^{j}\big(\text{conf}(n,w)\big)$, via Poincar\'{e}--Lefschetz duality applied to another space called $M(n,w)$ that is homotopy equivalent to $\text{conf}(n,w)$.

We begin by describing the critical cells of Alpert and Manin's discrete gradient vector field as they will correspond to elements in the cohomology of $\text{conf}(n,w)$; see \cite[Sections 3 and 4]{alpert2021configuration1} for further details.
Given a cell $f\in \text{cell}(n,w)$, we break $f$ into its blocks.
Each block is further decomposed into \emph{wheels}, where each entry of a block is the \emph{axle} of a wheel if it is the largest entry of the block up to that point, and the wheel consists of the axle and all the following smaller entries before the next axle.
A block $f_{i}$ of $f$ is a \emph{unicycle} if it consists of a single wheel, i.e., the largest element in $f_{i}$ is also the first element.
We say that $f_{i+1}$ is a \emph{follower} if $f_{i}$ is a unicycle (its \emph{leader}), which is not itself a follower and whose wheel is smaller than any of the follower's wheels.
The cell $f$ is critical if every block $f_{i}$ is either 
\begin{itemize}
\item a unicycle which is not a follower, or
\item a follower such that it and its leader have at least $w+1$ elements in total.
\end{itemize}
See Figure \ref{critical_cells_of_cell32} for an example.

\begin{figure}[h]
\centering
\captionsetup{width=.8\linewidth}
\includegraphics[width = 8cm]{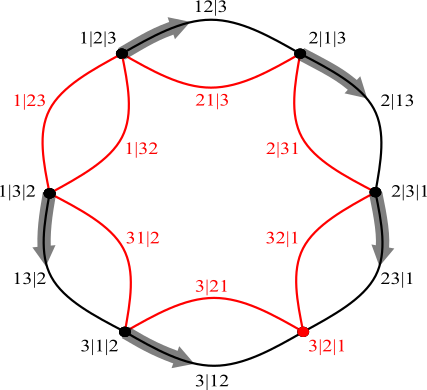}
\caption{The cellular complex $\text{cell}(3,2)$ with Alpert and Manin's discrete gradient vector field.
The critical cells of this vector field are in red.
This vector field arises from a total ordering of the cells of $\text{cell}(3,2)$, and a vector $[f, g]$ is included if and only if $f$ is the greatest face of $g$ and $g$ is the least coface of $f$.
For more on Alpert and Manin's ordering see \cite[Sections 3 and 4]{alpert2021configuration1}.
}
\label{critical_cells_of_cell32}
\end{figure}

We relate these critical cells to the cohomology ring of $\text{conf}(n,w)$ by studying another space, $M(n,w)$, that will allow us to take advantage of Poincar\'{e}--Lefschetz duality.
The \emph{follower-free} critical cells of $\text{cell}(n,w)$, i.e., those such that no block is a follower, will be of special interest to us, as the corresponding basis elements behave especially nicely under taking cup products.

Let $\emph{M(n,w)}\subset \R^{2n}$ be the configuration space of open disks of radius $1$ in a strip of any finite length $N>n$ and width $w+\epsilon$ for any $0<\epsilon<1$.
This space is a manifold (with boundary) of dimension $2n$, and it homotopy equivalent to $\text{conf}(n,w)$.
It follows from Poincar\'{e}--Lefschetz duality that
\[
H^{i}\big(\text{conf}(n,w)\big)\cong H^{i}\big(M(n,w)\big)\cong H_{2n-i}\big(M(n,w), \partial M(n,w)\big),
\]
and the cup product between classes in $H^{i}\big(M(n,w)\big)$ and $H^{j}\big(M(n,w)\big)$ is given by the transverse intersection map
\[
\pitchfork H_{2n-i}\big(M(n,w), \partial M(n,w)\big)\otimes H_{2n-j}\big(M(n,w), \partial M(n,w)\big)\to H_{2n-i-j}\big(M(n,w), \partial M(n,w)\big).
\]

Alpert and Manin describe a basis for $H_{2n-j}\big(M(n,w), \partial M(n,w)\big)$, hence a basis for $H^{j}\big(\text{conf}(n,w)\big)$, by associating basis elements to critical $j$-cells of $\text{cell}(n,w)$.
Let $f$ be the label of a critical cell, then the submanifold $V(f)\subset M(n,w)$ is the set of disk configurations such that
\begin{enumerate}
\item The disks in each block of $f$ are lined up vertically in order.
\item If a block $b_{1}$ comes before $b_{2}$ in $f$, they have a least $w+1$ elements combined, and one of them is a follower, then the column of disks labeled by elements of $b_{1}$ is to the left of the column of disks labeled by elements of $b_{2}$.
\end{enumerate}
See Figure \ref{pointinM144} for an example.

\begin{figure}[h]
\centering
\captionsetup{width=.8\linewidth}
\includegraphics[width = 12cm]{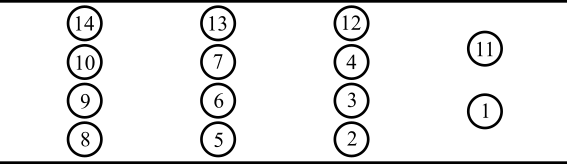}
\caption{A point in $V(14\, 10\, 9\, 8|13\, 7\, 6\, 5|12\, 4\,3\, 2|11\,1)$.
Note that the disks may wiggle up and down while remaining aligned.
The columns of the disks are allowed to move left and right, and in this submanifold the order of the columns can be permuted, so this point is one of the twenty four components of $V(14\, 10\, 9\, 8|13\, 7\, 6\, 5|12\, 4\,3\, 2|11\,1)$.
}
\label{pointinM144}
\end{figure}

Clearly this is a $(2n-j)$-dimensional submanifold of $M(n,w)$, and one can check that $\partial V(f)=V(f)\cap \partial M(n,w)$, proving that it represents a class in $H_{2n-j}\big(M(n,w), \partial M(n,w)\big)$.
Lemma 6.1 of Alpert and Manin proves that the $V(f)$ are in fact a basis for $H_{2n-j}\big(M(n,w), \partial M(n,w)\big)$ \cite[Lemma 6.1]{alpert2021configuration1}, and we will write $\nu(f)$ for the Poincar\'{e}--Lefschetz dual of $V(f)$.
Additionally, note that the follower-free critical cells correspond to submanifolds in which the columns of disks can be arranged in any order.

Given Proposition \ref{cup product lower bound general}, we wish to understand the cup product structure of $H^{*}\big(\text{conf}(n,w)\big)$.
First we recall a result of Alpert and Manin that will simplify our computations.

\begin{prop}\label{no repeats in cup product}
(Alpert--Manin \cite[Proposition 6.2]{alpert2021configuration1}) If there is a pair of labels $i$ and $j$ which are contained in the same block of critical cells $f$ and $g$, then $\nu(f)\cup \nu(g)=0$.
\end{prop}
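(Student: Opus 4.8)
The plan is to use the description of the cup product as transverse intersection of the submanifolds $V(f)$ and $V(g)$ inside $M(n,w)$, as set up in the paragraphs above. If $\nu(f)$ and $\nu(g)$ are the Poincar\'{e}--Lefschetz duals of $V(f)$ and $V(g)$, then $\nu(f)\cup\nu(g)$ is dual to the class of the transverse intersection $V(f')\pitchfork V(g')$, where $f'$ and $g'$ are generic translates of the configurations defining $V(f)$ and $V(g)$. So the task reduces to understanding what a disk configuration lying in (a translate of) both $V(f)$ and $V(g)$ must look like, and showing that once the labels $i$ and $j$ share a block in each of $f$ and $g$, no such transverse intersection point can exist --- or more precisely, that the intersection can be pushed off itself, so the intersection class is zero.

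The key geometric observation is this: if $i$ and $j$ lie in the same block of $f$, then in every configuration in $V(f)$ the disks labeled $i$ and $j$ are vertically aligned, i.e., they have the same $x$-coordinate (up to the fixed horizontal translate defining that copy of $V(f)$); and within that column the relative vertical order of $i$ and $j$ is determined by $f$. The same holds for $g$. First I would record that being vertically aligned is a \emph{codimension-one} condition on the pair $(i,j)$ --- it cuts out a hypersurface in the $2$-dimensional space of relative positions of disks $i$ and $j$. Then I would argue that the constraint ``$x_i = x_j$'' imposed by $f$ and the constraint ``$x_i=x_j$'' imposed by $g$ are the \emph{same} hypersurface, so imposing both does not drop dimension the way a generic transverse pair of codimension-one conditions would: the expected dimension of $V(f')\cap V(g')$, computed as $2n - \operatorname{codim}V(f) - \operatorname{codim}V(g)$, is strictly smaller than the actual dimension forced by this redundancy. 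Consequently $V(f')$ and $V(g')$ can never be made transverse (for generic translates their intersection has too-large dimension along the locus $x_i=x_j$), and by a small perturbation argument --- sliding the translate of $g'$ so that the $x_i=x_j$ wall of $g'$ no longer coincides with that of $f'$, while keeping all the other defining data generic --- the intersection becomes empty. Hence $\nu(f)\cup\nu(g)=0$.

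Concretely, the steps are: (1) recall that $\nu(f)\cup\nu(g)$ is computed by intersecting $V(f)$ with a generic translate $\tau\cdot V(g)$ and taking the resulting relative homology class; (2) show that any point of $V(f)\cap\tau\cdot V(g)$ forces disks $i$ and $j$ to satisfy both $x_i = x_j$ (from $f$) and $x_i - x_j = c$ for some nonzero constant $c$ depending on $\tau$ (from $\tau\cdot V(g)$), once $\tau$ is chosen to move the column containing $i,j$ in $g$ by a nonzero horizontal amount relative to its position in $f$; (3) observe these two conditions are incompatible, so the intersection is empty; (4) conclude the transverse-intersection class, hence the cup product, vanishes. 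I should be slightly careful in step (2) that a \emph{single} translate $\tau$ really can be chosen to create a genuine contradiction --- if $i$ and $j$ are in the same block of $g$ too, then $\tau\cdot V(g)$ still demands $x_i=x_j$ up to the \emph{overall} translation by $\tau$, which does shift this common value; choosing $\tau$ to be a horizontal translation by a small nonzero amount does the job, since it moves $V(g)$'s ``$x_i=x_j$'' wall to ``$x_i = x_j + \tau$''.

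I expect the main obstacle to be the bookkeeping in step (2): making precise what ``generic translate'' means in the Alpert--Manin framework (their $V(f)$ live in a strip of fixed finite length, so one must check there is enough room to translate, and that the translate of $V(g)$ still represents the same relative homology class), and verifying that the translated submanifold still meets $V(f)$ transversally away from the problematic locus so that the only obstruction to transversality --- and to nonemptiness --- is exactly the shared pair $(i,j)$. Once that geometric setup is pinned down, the vanishing is essentially immediate from the incompatibility of $x_i=x_j$ with $x_i=x_j+\tau$.
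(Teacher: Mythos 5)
Your overall strategy---observe that both $V(f)$ and $V(g)$ lie inside the hypersurface $\{x_i=x_j\}$ of $M(n,w)$, then replace one of them by a homologous representative pushed off that hypersurface so the intersection becomes empty---is the right spirit (the paper itself gives no argument at all here: it simply cites Alpert--Manin, Proposition 6.2, whose proof uses exactly this kind of perturbed representative). But the concrete mechanism you propose in step (2) does not work. Your ``translate'' $\tau\cdot V(g)$ is induced by a global horizontal translation of the strip, and such a translation shifts $x_i$ and $x_j$ by the \emph{same} amount: if a configuration satisfies $x_i=x_j$, so does its translate. The wall $\{x_i=x_j\}$ is translation-invariant, so $\tau\cdot V(g)$ still lies in it, the hoped-for condition $x_i=x_j+\tau$ never appears, and the contradiction in step (3) evaporates. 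Indeed $V(f)\cap\tau\cdot V(g)$ is typically nonempty (already for $f=g$), and no choice of a single ambient translation can separate the two submanifolds, precisely because the columns in both $V(f)$ and $V(g)$ are free to slide horizontally --- there is no fixed ``position in $f$'' relative to which $\tau$ could offset the column of $g$.

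To repair this you need a perturbation that moves disk $j$ \emph{relative to} disk $i$, e.g.\ replacing $V(g)$ by the submanifold $V'(g)$ in which the disks of each block of $g$ are required to line up along a slightly tilted line, or with small prescribed nonzero horizontal offsets between consecutive disks of a column. Then $V'(g)\subset\{x_i=x_j+\delta\}$ for some $\delta\neq 0$, which is genuinely disjoint from $V(f)\subset\{x_i=x_j\}$. But this perturbation is not induced by any ambient isotopy of $M(n,w)$ (moving one disk independently can create collisions with disks outside its column), so the claim $[V'(g)]=[V(g)]$ in $H_{*}\big(M(n,w),\partial M(n,w)\big)$ is not automatic and must be argued separately, for instance via the one-parameter family of submanifolds obtained by scaling the offsets from $0$ to $\delta$, checking that it gives a proper cobordism rel $\partial M(n,w)$. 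This relative-homology verification, together with the correct (relative, not global) perturbation, is the substance of Alpert--Manin's proof and is exactly what your proposal is missing.
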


Next, we note a fact that will greatly reduce the number of cup products we will need to take to determine whether a cup product is nonzero.
This result, along with the previous one, will allow us to use the identification $H^{2n-2\big\lceil\frac{n}{w}\big\rceil}\big(\text{conf}(n,w)\times\text{conf}(n,w)\big)\cong H^{n-\big\lceil\frac{n}{w}\big\rceil}\big(\text{conf}(n,w)\big)\otimes H^{n-\big\lceil\frac{n}{w}\big\rceil}\big(\text{conf}(n,w)\big)$.

\begin{prop}\label{product of more than w-1 terms with same first is trivial}
For $k\ge w$ and $i>j_{l}$ where $1\le l\le k$, let
\[
a_{l}=i\, j_{l}|n|\cdots|\widehat{i}|\cdots|\widehat{j_{l}}|\cdots|1
\]
be a follower-free critical cell of $\text{cell}(n,w)$.
Then,
\[
\prod\nu(a_{l})=0.
\]
\end{prop}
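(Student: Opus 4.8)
The plan is to compute the iterated transverse intersection $\bigcap_l V(a_l)$ inside $M(n,w)$ and show it is empty, which by Poincaré–Lefschetz duality forces $\prod \nu(a_l) = 0$. The cell $a_l = i\,j_l|n|\cdots|\widehat{i}|\cdots|\widehat{j_l}|\cdots|1$ has exactly one block of size two, namely $\{i, j_l\}$, and all other blocks are singletons. Since this is follower-free, the submanifold $V(a_l)$ is the set of configurations in which disk $i$ sits directly above disk $j_l$ (same $x$-coordinate), with the columns — here, $n-1$ of them, counting the $\{i,j_l\}$-column as one — allowed to occur in any left-to-right order. The key geometric observation is that requiring disk $i$ to be vertically aligned with $k\ge w$ distinct disks $j_1, \dots, j_k$ simultaneously is impossible: a configuration in $\bigcap_l V(a_l)$ would need disks $i, j_1, \dots, j_k$ all sharing the same $x$-coordinate, i.e., $k+1 \ge w+1$ disks stacked in a single vertical column, which does not fit in a strip of width $w+\epsilon$ with $\epsilon < 1$ (the disks have diameter $1$, so at most $w$ fit across). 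Hence $\bigcap_l V(a_l) = \emptyset$.

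First I would make the transversality precise: the submanifolds $V(a_l)$ are defined by the coordinate conditions $x_i = x_{j_l}$ (plus $y_i = y_{j_l} + 1$ to specify the order within the block, or rather the condition that they are adjacent and stacked), and one checks, exactly as in Alpert–Manin's setup, that these can be perturbed to meet transversely, so that the cup product $\prod \nu(a_l)$ is represented by the intersection $\bigcap_l V(a_l)$ as a class in $H_{*}(M(n,w), \partial M(n,w))$. Since each $V(a_l)$ has codimension $1$ (each $a_l$ is a $(n - \lceil n/w\rceil)$-cell, hence $V(a_l)$ has dimension $2n - (n-\lceil n/w \rceil) + $ ... — more simply, each imposes one independent linear condition $x_i = x_{j_l}$), and the conditions $x_i = x_{j_1}, \dots, x_i = x_{j_k}$ together say $x_i = x_{j_1} = \cdots = x_{j_k}$, the intersection forces $k+1$ disks into one column. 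Then I invoke that $M(n,w)$ is the configuration space of disks of diameter $1$ in a strip of width $w + \epsilon < w+1$: any two disks in the same column have $y$-coordinates differing by at least $1$, so $k+1$ disks in a column span a $y$-range of at least $k \ge w$, forcing the top and bottom disk centers to be at least $w$ apart, which exceeds the available height $w + \epsilon - 1 < w$. Contradiction, so the intersection is empty and the product class is zero.

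I expect the main obstacle to be handling the transversality and orientation bookkeeping carefully enough that "empty intersection implies zero product" is actually justified — one must ensure that the representative cycles really can be taken to the $V(a_l)$ and perturbed into general position without changing the homology classes, and that no subtlety arises from the boundary $\partial M(n,w)$ or from the multiple connected components of each $V(a_l)$ (permutations of columns). This is essentially routine given the framework of \cite{alpert2021configuration1}, but it is where the care lies. A cleaner alternative, which I would mention as a fallback, is to argue algebraically: by Proposition \ref{no repeats in cup product} the only way $\prod \nu(a_l)$ could be nonzero is if it is a linear combination of $\nu(g)$ for critical cells $g$ in which $i$ shares no block with any $j_l$ and the $j_l$ share no common block pairwise; one then shows on the level of the geometric basis that all such coefficients vanish because the "column containing $i$" cannot simultaneously carry the alignment data of $k \ge w$ followers-worth of constraints — but the direct intersection argument above is the most transparent, so that is the one I would write up.
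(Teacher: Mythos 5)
Your proposal is correct and follows essentially the same argument as the paper: the intersection of the $V(a_{l})$ would force the disk labeled $i$ to be vertically aligned with the $k\ge w$ disks $j_{1},\dots,j_{k}$, so $k+1>w$ disks would have to stack in one column, which cannot happen in the strip; hence the intersection is empty and $\prod\nu(a_{l})=0$. The extra remarks on transversality and on the algebraic fallback are fine but not needed beyond what the paper's one-paragraph proof already uses.
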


\begin{proof}
Consider the intersection of the $V(a_{l})$. 
In such an intersection the disk labeled $i$ is aligned above the disks labeled $j_{1},\dots, j_{k}$.
Since $k\ge w$, this means that $k+1>w$ disks must be aligned vertically in the intersection, which is impossible. 
Therefore, the intersection is empty and the cup product is $0$.
\end{proof}

The following two propositions will allow us to determine what terms might appear when we take the cup product of $n-\big\lceil\frac{n}{w}\big\rceil$ classes in $H^{1}\big(\text{conf}(n,w)\big)$.

\begin{prop}\label{product of critical cells with same first element is a sum of critical cells with the same first element}
Let
\[
f=i\, i_{1}\, \cdots\, i_{k}|n|\cdots|\widehat{i}|\cdots|\widehat{i_{l}}|\cdots|1
\]
be a follower-free critical $k$-cell of $\text{cell}(n,w)$ such that $k+1<w$, and let
\[
a=i\, j|n|\cdots|\widehat{i}|\cdots|\widehat{j}|\cdots|1
\]
be a follower-free critical $1$-cell of $\text{cell}(n,w)$ such that $j\neq i_{1},\dots, i_{k}$.
Then,
\[
\nu(f)\cup\nu(a)\sum\nu(e_{l}),
\]
where for $0\le l\le k$,
\[
e_{l}=i\, i_{1}\, \cdots\,i_{l}\, j\, i_{l+1}\, \cdots\,  i_{k}|n|\cdots|\widehat{i}|\cdots|\widehat{i_{l}}|\cdots|1.
\]
is a follower-free critical $(k+1)$-cell of $\text{cell}(n,w)$.
\end{prop}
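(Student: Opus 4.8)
The plan is to compute $\nu(f)\cup\nu(a)$ as a transverse intersection in $M(n,w)$: since $\nu(f)$ and $\nu(a)$ are the Poincar\'e--Lefschetz duals of $V(f)$ and $V(a)$, the product $\nu(f)\cup\nu(a)$ is dual to $V(f)\pitchfork V(a)$. Because $f$ and $a$ are follower-free, $V(f)$ is the set of configurations in which disks $i,i_1,\dots,i_k$ form a vertical column in that order (the remaining disks unconstrained), and $V(a)$ the set in which disks $i,j$ form a vertical column in that order. First I would describe the intersection set-theoretically. Disk $i$ sits at the same end of each of these two columns, so a configuration in $V(f)\cap V(a)$ has all of $i,i_1,\dots,i_k,j$ on a single vertical line with $i$ at the top, with $i_1,\dots,i_k$ in their prescribed order and $j$ somewhere below $i$; since disks cannot overlap, $j$ lands in exactly one of the $k+1$ gaps of the column $i,i_1,\dots,i_k$. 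Hence $V(f)\cap V(a)=\bigsqcup_{l=0}^{k}V(e_l)$, and this is a genuine disjoint union because $j$ cannot be level with any $i_m$.

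Next I would verify transversality and determine the multiplicities. Near a point $p\in V(e_l)$, the submanifold $V(f)$ is cut out by the $k$ equations $x_{i_1}=\cdots=x_{i_k}=x_i$ (the vertical-order inequalities being open) and $V(a)$ by the single equation $x_j=x_i$; since $j\notin\{i,i_1,\dots,i_k\}$, these $k+1$ linear equations are independent, so $T_pV(f)+T_pV(a)=T_pM(n,w)$ and the intersection is transverse of the expected dimension $2n-k-1=\dim V(e_l)$. Checking as usual that $V(f)\cap V(a)$ meets $\partial M(n,w)$ transversally, the transverse intersection represents $[V(f)]\cdot[V(a)]=\sum_{l=0}^{k}m_l[V(e_l)]$ with each $m_l=\pm1$; over $\mathbb{Z}/2$ (which is all the zero-divisor-cup-length bound requires) every $m_l=1$, and dualizing gives $\nu(f)\cup\nu(a)=\sum_{l=0}^{k}\nu(e_l)$.

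It remains to confirm that each $e_l$ is a follower-free critical $(k+1)$-cell, so that the right-hand side is a sum of basis classes. The hypothesis $k+1<w$ forces $i=n$: otherwise $n$ lies outside the first block of $f$, so the singleton $\{n\}$ immediately follows that block, and since $n>i$ it is a follower of it; criticality of $f$ would then demand $(k+1)+1\ge w+1$, contradicting $k+1<w$. Given $i=n$, the first block of $e_l$ has its maximum $n$ in the leftmost position, hence is a unicycle, and being the first block it is not a follower; the remaining blocks of $e_l$ are singletons occurring in strictly decreasing order with entries smaller than $n$, so none of them is a follower. Thus $e_l$ is follower-free and critical, and since its unique multi-element block has size $k+2\le w$, it is a $(k+1)$-cell of $\text{cell}(n,w)$.

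The main obstacle I anticipate is making the intersection-theoretic bookkeeping airtight: confirming that $V(f)\cap V(a)$ contains no additional lower-dimensional strata (which the non-overlap observation rules out), and, should one want the identity over $\mathbb{Z}$ rather than $\mathbb{Z}/2$, checking that the orientation conventions of Alpert--Manin on the submanifolds $V(\cdot)$ make all $k+1$ intersection multiplicities equal $+1$ rather than merely $\pm1$.
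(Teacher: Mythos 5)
Your main computation is exactly the paper's: identify $\nu(f)\cup\nu(a)$ with the transverse intersection of $V(f)$ and $V(a)$ in $M(n,w)$, observe that in the intersection the disks $i,i_1,\dots,i_k,j$ occupy a single vertical column with $i$ on top, the $i_m$ in order, and $j$ slotted into one of the $k+1$ gaps, and recognize this as $\bigsqcup_{l=0}^{k}V(e_l)$. The paper's proof is precisely this, stated more tersely; your added remarks on transversality, dimension count, and working over $\mathbb{Z}/2$ to avoid orientation bookkeeping are reasonable refinements (the zero-divisor argument only needs some coefficient ring).

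There is, however, one incorrect step in your verification that each $e_l$ is a follower-free critical cell: the claim that $k+1<w$ forces $i=n$. Your argument assumes that the singleton block $\{n\}$ following the first block of $f$ would be a follower merely because $n>i$, i.e., you compare wheels by their axle labels alone. In Alpert--Manin's ordering, wheels are compared by size first (with axle labels only breaking ties), so the leader's wheel, having $k+1\ge 2$ elements, is \emph{not} smaller than the one-element wheel $\{n\}$, and the singleton $\{n\}$ is not a follower; hence cells $f$ and $a$ with $i<n$ are perfectly good follower-free critical cells. This is not a side issue: the paper applies this proposition in Lemma \ref{cup divisor cup length for all w} to the cells $a_{i,j}$ and $b_{i,j}$ whose first entry is $n-i<n$ for $i\ge 1$, which under your reading would not even be critical for $w\ge 3$. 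So your case analysis silently omits the case $i<n$, where your follower-freeness argument for $e_l$ does not apply as written. The conclusion is still true and the repair is short: criticality of $a$ gives $i>j$, so the first block of each $e_l$ has its maximum $i$ first and is a unicycle, and being the first block it is not a follower; every other block of $e_l$ is a singleton, and a singleton is never a follower of the preceding block, since that block is either the large first block (not smaller, by the size comparison) or an earlier singleton with a larger label (not smaller, by the axle comparison). With that correction your proof is complete and coincides with the paper's.
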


\begin{proof}
The intersection of $V(f)$ and $V(a)$ is transverse.
It consists of all configurations such that the disk labeled $i$ is aligned above the disks labeled $j, i_{1},\dots, i_{k}$, and the disk labeled $i_{l}$ is above the disk labeled $i_{l+1}$. 
This exactly the submanifold $\sqcup V(e_{l})$, completing the proof.
\end{proof}

\begin{prop}\label{product of disjoint critical cells}
Let
\[
f=i_{1,1}\, \cdots\, i_{1, n_{1}}|\cdots|i_{j, 1}\, \cdots\, i_{j, n_{j}}|i_{n_{1}+\cdots+n_{j}+1}|\cdots|i_{n}
\]
be a follower-free critical cell of $\text{cell}(n,w)$,
and let
\[
g=i_{j+1, 1}\, \cdots\, i_{j+1, n_{j+1}}|i'_{n_{j+1}}|\cdots|i'_{n}
\]
be a follower-free critical cell of $\text{cell}(n,w)$, such that $i_{j+1, k}\in \{i_{n_{1}+\cdots+n_{j}+1},\dots,i_{n}\}$ for $1\le k\le n_{j+1}$.
Then
\[
\nu(f)\cup \nu(g)=\nu(e),
\]
where
\[
e=i_{1,1}\, \cdots\, i_{1, n_{1}}|\cdots|i_{j+1, 1}\, \cdots\, i_{j+1, n_{j+1}}|\cdots|i_{j, 1}\, \cdots\, i_{j, n_{j}}|i_{n_{1}+\cdots+n_{j}+1}|\cdots|\widehat{i_{j+1, 1}}|\cdots|\widehat{i_{j+1, n_{j+1}}}|\cdots|i_{n}
\]
is a follower-free critical cell of $\text{cell}(n,w)$, i.e., there is an $1\le l\le j$ such that $n_{l}\le n_{j+1}\le n_{l+1}$ and if $n_{l}=n_{j+1}$, then $i_{1, l}>i_{1,j+1}$, and if $n_{j+1}=n_{l+1}$, then $i_{1, j+1}>i_{1,l+1}$.
\end{prop}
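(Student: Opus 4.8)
The plan is to compute the cup product via the transverse--intersection description of the ring $H^{*}\big(M(n,w)\big)$, exactly as in the proofs of Propositions \ref{no repeats in cup product} and \ref{product of critical cells with same first element is a sum of critical cells with the same first element}: the class $\nu(f)\cup\nu(g)$ is the Poincar\'{e}--Lefschetz dual of the transverse intersection $V(f)\cap V(g)$, once transversality has been checked.

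First I would write down $V(f)$ and $V(g)$ explicitly. Since $f$ and $g$ are follower-free, condition (2) in the definition of $V(\cdot)$ is vacuous, so $V(f)$ is the set of configurations in $M(n,w)$ in which the disks of each block $i_{a,1}\cdots i_{a,n_{a}}$, $1\le a\le j$, form a vertical column in that order, with the columns and the singleton disks $i_{n_{1}+\cdots+n_{j}+1},\dots,i_{n}$ otherwise unconstrained; likewise $V(g)$ is the set of configurations in which $i_{j+1,1},\dots,i_{j+1,n_{j+1}}$ form a vertical column in that order. Near any of its points, $V(f)$ is the intersection of an open set with the linear subspace cut out by the equations $x_{i_{a,1}}=\cdots=x_{i_{a,n_{a}}}$ for $1\le a\le j$, and $V(g)$ with the subspace $x_{i_{j+1,1}}=\cdots=x_{i_{j+1,n_{j+1}}}$. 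The hypothesis that $i_{j+1,1},\dots,i_{j+1,n_{j+1}}$ are singleton blocks of $f$ means that the coordinates appearing in the defining equations of $V(g)$ are disjoint from those appearing in the defining equations of $V(f)$; hence the two linear systems are jointly independent, the tangent spaces of $V(f)$ and $V(g)$ span at every common point, and $V(f)\cap V(g)$ is transverse of codimension $\sum_{a=1}^{j}(n_{a}-1)+(n_{j+1}-1)=\operatorname{codim}V(f)+\operatorname{codim}V(g)$.

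Next I would identify the intersection. A configuration lies in $V(f)\cap V(g)$ exactly when the disks of each of the $j+1$ blocks $i_{1,1}\cdots i_{1,n_{1}},\ \dots,\ i_{j,1}\cdots i_{j,n_{j}},\ i_{j+1,1}\cdots i_{j+1,n_{j+1}}$ form a vertical column in order, everything else free; since $V(\cdot)$ of a follower-free cell depends only on its blocks and not on the order in which they are listed, this is precisely $V(e)$ as soon as $e$ is a critical cell having exactly these blocks. The displayed $e$ is obtained by deleting $i_{j+1,1},\dots,i_{j+1,n_{j+1}}$ from the singletons of $f$ and splicing in the block $i_{j+1,1}\cdots i_{j+1,n_{j+1}}$; the position $l$, characterized by $n_{l}\le n_{j+1}\le n_{l+1}$ together with the stated tie-breaking on first entries, is forced by arranging the blocks of $e$ into the canonical order of a follower-free critical cell (non-decreasing block size, larger first entry first among equal sizes). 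It then remains to observe that this $e$ really is follower-free critical: each of its blocks is a unicycle inherited from $f$ and $g$, and in the canonical block order no block is a follower of its predecessor. Granting this, $V(f)\cap V(g)=V(e)$, and passing to Poincar\'{e}--Lefschetz duals gives $\nu(f)\cup\nu(g)=\nu(e)$.

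I expect the main obstacle to be precisely this last point: confirming that the spliced-in block lands at a position making $e$ follower-free critical, i.e.\ that no block of $e$ becomes a follower in the sense of Alpert and Manin. This requires care with their notions of wheels, leaders, and followers, and is most cleanly dispatched by invoking their characterization of the critical (indeed follower-free critical) cells of $\text{cell}(n,w)$ \cite[Sections 3--4]{alpert2021configuration1}; every other step --- the shape of $V(f)$ and $V(g)$, the transversality verification, and the set equality $V(f)\cap V(g)=V(e)$ --- is a direct unwinding of the definitions.
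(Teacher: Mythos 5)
Your proposal is correct and follows essentially the same route as the paper: compute $\nu(f)\cup\nu(g)$ as the Poincar\'{e}--Lefschetz dual of the transverse intersection $V(f)\cap V(g)$, observe that transversality holds because the column constraints of $V(g)$ involve only disks that are singletons of $f$, and identify the intersection with $V(e)$, the submanifold in which the spliced-in column can sit anywhere among the columns of $V(f)$. You spell out the transversality and the placement of the new block (the index $l$ and the follower-free check) in more detail than the paper, which simply asserts these points, but the underlying argument is the same.
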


\begin{proof}
It is clear that $V(f)$ and $V(g)$ have transverse intersection.
Moreover, their intersection is the submanifold of $V(f)$ such that the disks $i_{j+1, 1}, \dots, i_{j+1, n_{j+1}}$ are aligned vertically in that order.
It follows that this column of disks can be anywhere among the columns of the other disks of $V(f)$. 
This is exactly $V(e)$.
\end{proof}

Now we are ready to calculate the zero-divisor-cup-length of $H^{*}\big(\text{conf}(n,w)\big)$ for $n>w$.
Our proof relies on the observation that the submanifold of $M(n,w)$ corresponding to $V(f)$ where $f$ is a follower-free top dimensional critical cell of $\text{cell}(n,w)$ such that the $\big\lceil\frac{n}{w}\big\rceil$ blocks of $f$ have first element in $\big\{n,\dots, n-\big\lceil\frac{n}{w}\big\rceil+1\big\}$, see Figure \ref{pointinM144}, is a submanifold contained in the intersection of $n-\big\lceil\frac{n}{w}\big\rceil$ submanifolds arising from follower-free critical $1$-cells of $\text{cell}(n,w)$ such that the first element of the block with two elements is in $\big\{n,\dots, n-\big\lceil\frac{n}{w}\big\rceil+1\big\}$, and the second element is in $\big\{n-\big\lceil\frac{n}{w}\big\rceil,\dots, 1\big\}$.
If we carefully choose two disjoint sets $A$ and $B$ of $n-\big\lceil\frac{n}{w}\big\rceil$ such follower-free critical $1$-cells, we will get a non-trivial product in $H^{*}\big(\text{conf}(n,w)\otimes\text{conf}(n,w)\big)$.

\begin{lem}\label{cup divisor cup length for all w}
For $n>w$ the zero-divisor-cup-length of $H^{*}\big(\text{conf}(n,w)\big)$ is $2n-2\big\lceil\frac{n}{w}\big\rceil$.
\end{lem}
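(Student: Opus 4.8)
The plan is to prove the two matching inequalities. The upper bound $\le 2n-2\big\lceil\frac{n}{w}\big\rceil$ is formal: by Proposition~\ref{dimension of cell(n,w)} the space $\text{conf}(n,w)\simeq\text{cell}(n,w)$ has the homotopy type of a CW complex of dimension $p:=n-\big\lceil\frac{n}{w}\big\rceil$, so $H^{*}\big(\text{conf}(n,w)\times\text{conf}(n,w)\big)$ vanishes in degrees above $2p$; and every zero-divisor has positive degree, since its degree-zero part is a multiple of $1\otimes 1$, which restricts to a non-zero class on the diagonal. Hence any cup product of more than $2p$ zero-divisors lies in a trivial group, and it suffices to produce $2p$ zero-divisors with non-trivial product, which then forces the zero-divisor-cup-length to equal $2p=2n-2\big\lceil\frac{n}{w}\big\rceil$.

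Write $N=\big\lceil\frac{n}{w}\big\rceil$; since $n>w$ we may assume $w\ge 2$, so that $N\ge 2$ and $Nw\ge n$. First I would fix a follower-free top-dimensional critical cell $f_{0}$ of $\text{cell}(n,w)$ whose $N$ blocks are headed by the labels $n,n-1,\dots,n-N+1$ and whose remaining ``small'' labels $1,\dots,p$ are distributed among the blocks so that every block has at most $w$ elements; for instance, placing small label $\ell$ in the block headed by $n-\big((\ell-1)\bmod N\big)$ uses each block at most $\lceil p/N\rceil\le w-1$ times. Let $A$ be the family of the $p$ follower-free critical $1$-cells $\{\,i,\ell\,\}$ pairing each small label $\ell$ with the head $i$ of its block in $f_{0}$, with all other labels as singletons arranged by decreasing head, which makes each such cell follower-free. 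Next, choose a ``shifted'' assignment $\ell\mapsto i'(\ell)$, where $i'(\ell)$ heads a different block than $\ell$'s block in $f_{0}$ and again each block is used at most $w-1$ times — possible because $N\ge 2$ — let $g_{0}$ be the resulting follower-free top cell and $B$ the resulting family of $1$-cells, so that $A\cap B=\varnothing$. The $2p$ classes $\{\overline{\nu(a)}:a\in A\}\cup\{\overline{\nu(b)}:b\in B\}$ are the proposed zero-divisors.

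To show their product is non-zero I would look at its component in $H^{p}\big(\text{conf}(n,w)\big)\otimes H^{p}\big(\text{conf}(n,w)\big)\cong H^{2p}\big(\text{conf}(n,w)\times\text{conf}(n,w)\big)$. Expanding the product of the $\overline{\nu(x)}=1\otimes\nu(x)-\nu(x)\otimes 1$ over $x\in A\cup B$, this component is a signed sum, over $p$-element subsets $S\subseteq A\cup B$, of terms $\pm\big(\prod_{x\in S}\nu(x)\big)\otimes\big(\prod_{x\notin S}\nu(x)\big)$. Iterating Propositions~\ref{product of critical cells with same first element is a sum of critical cells with the same first element} and~\ref{product of disjoint critical cells} — the requirement that every block of $f_{0}$ have at most $w$ elements keeping all the cells that appear inside $\text{cell}(n,w)$, cf.\ Proposition~\ref{product of more than w-1 terms with same first is trivial} — the class $\prod_{a\in A}\nu(a)$ is, up to sign, the sum of $\nu(f_{\sigma})$ over all cells $f_{\sigma}$ obtained from $f_{0}$ by permuting small labels within blocks; in particular $\nu(f_{0})$ appears in it with coefficient $\pm 1$, and likewise $\nu(g_{0})$ appears in $\prod_{b\in B}\nu(b)$ with coefficient $\pm 1$. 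The crucial point is that $S=A$ is the only subset that contributes to the coefficient of $\nu(f_{0})\otimes\nu(g_{0})$: \emph{if some $b\in B$ lies in $S$, then by disjointness of $A$ and $B$ the two labels of $b$ lie in distinct blocks of $f_{0}$, so the submanifold $V(b)$ meets $V(f_{0})$ only along the locus where two columns of $f_{0}$ share a horizontal coordinate, and consequently $\nu(f_{0})$ cannot occur in $\prod_{x\in S}\nu(x)$;} thus only $S=A$ survives and the coefficient of $\nu(f_{0})\otimes\nu(g_{0})$ in the full product is $\pm 1\neq 0$. This makes the product of the $2p$ zero-divisors non-trivial, and with the upper bound the zero-divisor-cup-length is exactly $2n-2\big\lceil\frac{n}{w}\big\rceil$.

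The step I expect to be the main obstacle is exactly the italicized claim: that $\nu(f_{0})$ does not appear in $\prod_{x\in S}\nu(x)$ once $S$ contains a $1$-cell whose two labels lie in different blocks of $f_{0}$. Proving this carefully uses Alpert and Manin's Poincar\'{e}--Lefschetz-dual basis for $H_{*}\big(M(n,w),\partial M(n,w)\big)$: one shows that pairing $\prod_{x\in S}\nu(x)$ against the homology class dual to $V(f_{0})$ is zero, because that dual class is represented by configurations in which the $N$ columns of $f_{0}$ occupy distinct horizontal positions in a fixed order, and such configurations miss $V(b)$ entirely; equivalently, transversally intersecting $V(f_{0})$ with $V(b)$ yields only classes supported on cells in which two blocks of $f_{0}$ have been merged into one column, none of which is $f_{0}$. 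One also has to treat the subsets $S$ for which the graph on $\{1,\dots,n\}$ whose edges are the label-pairs of the $1$-cells in $S$ contains a cycle, so that $\bigcap_{x\in S}V(x)$ is not a transverse intersection; but the same principle — that $\nu(f_{0})$ is only reachable through $1$-cells internal to the blocks of $f_{0}$ — still resolves these cases.
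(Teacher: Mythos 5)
Your proposal is correct and follows essentially the same route as the paper: the same two families of zero-divisors $\overline{\nu(a)},\overline{\nu(b)}$ built from follower-free critical $1$-cells pairing each ``small'' label with a block head under two different (shifted) assignments, the same reduction to the $H^{p}\otimes H^{p}$ component, and the same use of Propositions \ref{product of more than w-1 terms with same first is trivial}, \ref{product of critical cells with same first element is a sum of critical cells with the same first element}, and \ref{product of disjoint critical cells} to expand the products in the Alpert--Manin basis. The only (cosmetic) difference is the final non-cancellation bookkeeping: you extract the coefficient of the single basis element $\nu(f_{0})\otimes\nu(g_{0})$ and show only $S=A$ contributes, whereas the paper argues that distinct subsets of the $1$-cells yield basis elements distinguished by a block unique to that subset; your identification of the key point (that $\nu(f_{0})$ cannot appear once $S$ contains a cell joining a small label to a head outside its $f_{0}$-block) is exactly the content the paper extracts from those propositions.
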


\begin{proof}
Set $m:=\big\lceil\frac{n}{w}\big\rceil$.
For $0\le i\le m-2$ and $0\le j\le w-2$, let
\[
a_{i, j}=(n-i)\, \big(n-m-(w-1)i-j\big)|n|\cdots|\widehat{(n-i)}|\cdots|\widehat{\big(n-m-(w-1)i-j\big)}|\cdots|1
\]
and, for $i=m-1$ and $0\le j\le (n\text{ mod}w)-2$, let
\[
a_{i,j}=(n-m+1)\, \big(n-m-(w-1)(m-1)-j\big)|n|\cdots|\widehat{(n-m+1)}|\cdots|\widehat{\big(n-m-(w-1)(m-1)-j\big)}|\cdots|1
\]
be critical $1$-cells of $\text{cell}(n,w)$.
Similarly, for $0\le i\le m-2$ and $0\le j\le w-2$, let
\[
b_{i, j}=(n-i+1)\, \big(n-m-(w-1)i-j\big)|n|\cdots|\widehat{(n-i)}|\cdots|\widehat{\big(n-m-(w-1)i-j\big)}|\cdots|1
\]
and, for $i=m-1$ and $0\le j\le (n\text{ mod}w)-2$,
\[
b_{i,j}=(n)\, \big(n-m-(w-1)(m-1)-j\big)|n|\cdots|\widehat{(n-m+1)}|\cdots|\widehat{\big(n-m-(w-1)(m-1)-j\big)}|\cdots|1
\]
be critical $1$-cells of $\text{cell}(n,w)$.
There are precisely $n-\big\lceil\frac{n}{w}\big\rceil$ of each of the $a_{i, j}$s and $b_{i,j}$s.

Set
\[
\alpha_{i,j}=\nu(a_{i,j})\indent\text{and}\indent\beta_{i,j}=\nu(b_{i,j}),
\]
and
\[
\overline{\alpha_{i,j}}=\alpha_{i,j}\otimes 1-1\otimes \alpha_{i,j}\indent\text{and}\indent
\overline{\beta_{i,j}}=\beta_{i,j}\otimes 1-1\otimes \beta_{i,j}.
\]

We claim that 
\[
\prod\overline{\alpha_{i,j}}\cup\overline{\beta_{i,j}}\neq0.
\]

To see this note that any nonzero term in the cup product in $H^{2n-2\big\lceil\frac{n}{w}\big\rceil}\big(\text{conf}(n,w)\times \text{conf}(n,w)\big)$ must be in $H^{n-\lceil\frac{n}{w}\rceil}\big(\text{conf}(n,w)\big)\otimes H^{n-\lceil\frac{n}{w}\rceil}\big(\text{conf}(n,w)\big)$ as $H^{k}\big(\text{conf}(n,w)\big)=0$ for $k>n-\big\lceil\frac{n}{w}\big\rceil$.

Consider any cup product of a total of $n-\big\lceil\frac{n}{w}\big\rceil$ of the $\alpha_{i,j}$s and $\beta_{i,j}$s.
Since the cohomology ring is graded commutative, it follows that we may first take the product of all the terms such that $i=0$, take the product of all the terms such that $i=1$, etc, and then take the product of the resulting classes.
Note that by Proposition \ref{product of more than w-1 terms with same first is trivial}, if for any $0\le i\le m-1$ there are more than $w-1$ total terms of the form $\alpha_{i,j}$ or $\beta_{i,j}$, then the cup product is $0$. 
By Proposition \ref{product of critical cells with same first element is a sum of critical cells with the same first element} for each  $0\le i\le m-1$ we may rewrite the product of the $\alpha_{i, j}$s and $\beta_{i,j}$s as a sum of products of $\nu(e_{i})$, where the $e_{i}$ are follower-free critical cells of the form
\[
e_{i}=n-i\, i_{i,2}\, \cdots, i_{i,l}|n|\widehat{n-i}|\cdots|\widehat{i_{i,2}}|\cdots|\widehat{i_{i,l}}|\cdots|1,
\]
where the $i_{2},\dots, i_{l}$ are permutations of the second elements in the block with two elements of the critical cells of the form $a_{i,j}$ and $b_{i,j}$ corresponding to the $\alpha_{i, j}$s and $\beta_{i,j}$s.
Moreover, if some element appears as the second element in the block with two elements in two of the $a_{i,j}$s and $b_{i,j}$s, then the cup product is trivial.
This follows from the fact the first element of the two blocks must be different, and since we may assume that no first entry $n-i$ appears more than $w-1$ times in product, any two first elements of a block must appear more than $w-1$ times in the product.
It follows that every non-trivial product of $n-\big\lceil\frac{n}{w}\big\rceil$ of the $\alpha_{i,j}$s and $\beta_{i,j}$s can be written as sum of products the $\nu(e_{i})$s where for each product of the $\alpha_{i,j}$s and $\beta_{i,j}$s and at least $1$ of $i$, the labels in the first block of the $e_{i}$s is different than in any other product.
By Proposition \ref{product of disjoint critical cells} every such product can be written as a sum of follower-free critical $\big(n-\big\lceil\frac{n}{w}\big\rceil\big)$-cells of $\text{cell}(n,w)$ whose $m$ blocks are the first blocks of the $e_{i}$s for $0\le i\le m-1$.
Additionally, these are all basis elements.
Moreover, every nontrivial product of the $\alpha_{i, j}$s and $\beta_{i,j}$s is such that the elements in at least one of the $m$ blocks are unique to that set of $n-\big\lceil\frac{n}{w}\big\rceil$ of the $\alpha_{i,j}$s and $\beta_{i,j}$s.
Therefore, $\prod\overline{\alpha_{i,j}}\cup\overline{\beta_{i,j}}\neq 0$.
\end{proof}

Combining this lemma and Proposition \ref{cup product lower bound general} yields the following lower bound for $\textbf{TC}\big(\text{conf}(n,w)\big)$.

\begin{cor}\label{lower bound for tcconfnw}
\[
\textbf{TC}\big(\text{conf}(n,w)\big)>2n-2\Big\lceil\frac{n}{w}\Big\rceil.
\]
\end{cor}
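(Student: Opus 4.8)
The plan is to deduce the Corollary directly from the two results just assembled: Lemma~\ref{cup divisor cup length for all w} supplies the required family of zero-divisors, and Proposition~\ref{cup product lower bound general} converts a nonvanishing cup product of zero-divisors into a lower bound for $\textbf{TC}$.

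First I would recall that, for each admissible pair $(i,j)$, the classes
\[
\overline{\alpha_{i,j}}=\alpha_{i,j}\otimes 1-1\otimes\alpha_{i,j}
\quad\text{and}\quad
\overline{\beta_{i,j}}=\beta_{i,j}\otimes 1-1\otimes\beta_{i,j}
\]
lie in $H^{*}\big(\text{conf}(n,w)\times\text{conf}(n,w)\big)$ and, by Farber's observation recorded after Proposition~\ref{cup product lower bound general}, are zero-divisors, since each has the form $\pm(1\otimes\gamma-\gamma\otimes 1)$ for a suitable $\gamma\in H^{1}\big(\text{conf}(n,w)\big)$.

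Second, Lemma~\ref{cup divisor cup length for all w} asserts exactly that the product of all $2n-2\big\lceil\frac{n}{w}\big\rceil$ of these classes — the $n-\big\lceil\frac{n}{w}\big\rceil$ zero-divisors $\overline{\alpha_{i,j}}$ together with the $n-\big\lceil\frac{n}{w}\big\rceil$ zero-divisors $\overline{\beta_{i,j}}$ — is nonzero in $H^{*}\big(\text{conf}(n,w)\times\text{conf}(n,w)\big)$. Hence the zero-divisor-cup-length of $H^{*}\big(\text{conf}(n,w)\big)$ is at least $2n-2\big\lceil\frac{n}{w}\big\rceil$ (and since $H^{k}\big(\text{conf}(n,w)\big)=0$ for $k>n-\big\lceil\frac{n}{w}\big\rceil$ it equals that, though only the lower bound is needed here).

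Third, I would invoke Proposition~\ref{cup product lower bound general}, which states that the zero-divisor-cup-length of $H^{*}\big(\text{conf}(n,w)\big)$ is strictly less than $\textbf{TC}\big(\text{conf}(n,w)\big)$; combined with the previous step this gives $\textbf{TC}\big(\text{conf}(n,w)\big)>2n-2\big\lceil\frac{n}{w}\big\rceil$. There is no genuine obstacle at the level of the Corollary itself — all of the work has been absorbed into Lemma~\ref{cup divisor cup length for all w}, whose proof must verify that the enormous cup product does not collapse, which is precisely where Propositions~\ref{no repeats in cup product}, \ref{product of more than w-1 terms with same first is trivial}, \ref{product of critical cells with same first element is a sum of critical cells with the same first element}, and \ref{product of disjoint critical cells} are used to control which follower-free critical cells survive. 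Finally, pairing this Corollary with the upper bound of Proposition~\ref{upper bound for top complex of conf(n,w)} pins down $\textbf{TC}\big(\text{conf}(n,w)\big)=2n-2\big\lceil\frac{n}{w}\big\rceil+1$ when $n>w$, the main case of Theorem~\ref{top complexity of conf(n,w)}.
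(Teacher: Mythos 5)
Your proposal is correct and matches the paper's argument exactly: the corollary is obtained by combining Lemma~\ref{cup divisor cup length for all w} (which shows the product of the $2n-2\big\lceil\frac{n}{w}\big\rceil$ zero-divisors $\overline{\alpha_{i,j}}$, $\overline{\beta_{i,j}}$ is nonzero, so the zero-divisor-cup-length is at least $2n-2\big\lceil\frac{n}{w}\big\rceil$) with Farber's bound in Proposition~\ref{cup product lower bound general}. All the substantive work lives in the lemma, just as you observe.
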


It follows that we have a proof of Theorem \ref{top complexity of conf(n,w)}, which we restate for convenience.

\begin{T1}
  \thmtext
\end{T1} 

\begin{proof}
If $n=1$, then $\text{conf}(n,w)$ is contractible, so \cite[Theorem 1]{farber2003topological} proves that $\textbf{TC}\big(\text{conf}(1,w)\big)=1$.
If $1<n\le w$, then $\text{conf}(n,w)$ is homotopy equivalent to $F_{n}(\R^{2})$, so \cite[Theorem 1]{farber2002topological} proves that $\textbf{TC}\big(\text{conf}(n,w)\big)=2n-2$.
Finally, if $n>w$, then Corollary \ref{lower bound for tcconfnw} proves that $\textbf{TC}\big(\text{conf}(n,w)\big)>2n-2\big\lceil\frac{n}{w}\big\rceil$ and Proposition \ref{upper bound for top complex of conf(n,w)} proves that $\textbf{TC}\big(\text{conf}(n,w)\big)\le2n-2\big\lceil\frac{n}{w}\big\rceil+1$, forcing
\[
\textbf{TC}\big(\text{conf}(n,w)\big)=2n-2\Big\lceil\frac{n}{w}\Big\rceil+1.
\]
\end{proof}

It follows that any program guiding $n$ robots in the width $w$ aisle must consider at least $2n-2\big\lceil\frac{n}{w}\big\rceil+1$ cases.
Moreover, if we are very clever, we could develop a program that considers exactly $2n-2\big\lceil\frac{n}{w}\big\rceil+1$ cases.

\bibliographystyle{amsalpha}
\bibliography{TopComplexConfnwBib}
\end{document}